\newtheorem{theorem}[subsection]{Theorem}
\newtheorem{lemma}[subsection]{Lemma}
\newtheorem{cor}[subsection]{Corollary}
\newtheorem{remark}[subsection]{Remark}
\newcommand{\dist}{{\mathop{\rm dist}}}
\newcommand{\A}{{\mathcal A}}
\newcommand{\B}{{\mathcal B}}
\newcommand{\C}{{C}}
\newcommand{\E}{{\mathcal E}}
\newcommand{\hc}{{\mathcal H}}
\newcommand{\hk}{{\mathcal K}}
\newcommand{\N}{{\mathcal N}}
\newcommand{\norm}[1]{\left\lVert#1\right\rVert}
\newcommand{\bF}{{\mathbb{F}}}
\newcommand{\bR}{{\mathbb{R}}}
\newcommand{\bC}{{\mathbb{C}}}
\newcommand{\bN}{{\mathbb{N}}}
\newcommand{\tr}{{\mathrm{tr }}}
\newcommand{\bra}{\langle}
\newcommand{\ket}{\rangle}
\newcommand{\tens}[1]{\mathbin{\mathop{\otimes}\limits_{#1}}}
\begin{document}


\title{Best approximations, distance formulas and orthogonality in $C^*$-algebras}

\author[Grover]{Priyanka Grover}
\author[Singla]{Sushil Singla}
\address{Department of Mathematics, Shiv Nadar University, NH-91, Tehsil Dadri, Gautam Buddha Nagar, U.P. 201314, India.}
\email{priyanka.grover@snu.edu.in, ss774@snu.edu.in}
\subjclass[2010]{Primary 46L05, 	46L08, 41A50; Secondary 46B20, 41A52, 47B47} 
\keywords{Best approximation, conditional expectation, Birkhoff-James orthogonality, cyclic representation, state, Hilbert $C^*$-module}

\maketitle

\begin{abstract} 
For a unital $C^*$-algebra $\A$ and a subspace $\B$ of $\A$, a characterization for a  best approximation to an element of $\A$ in $\B$ is obtained. As an application, a formula for the distance of an element of $\A$ from $\B$ has been obtained, when  a best approximation of that element to $\B$ exists. Further, a characterization for Birkhoff-James orthogonality of an element of a Hilbert $C^*$-module to a subspace is obtained.
\end{abstract}

\setlength{\parindent}{0pt}
\setlength{\parskip}{1.6ex}

\pagestyle{headings}

\section{Introduction}
Let $\A$ be a unital $C^*$-algebra over $\bF(=\bR$ or $\bC)$ with the identity element  $1_{\A}$. The $C^*$-subalgebras of $\A$  are assumed to contain $1_{\A}$. For $a\in \A$ and $\B$ a subspace of $\A$, $\dist(a,\B)$ denotes $\inf\{\|a-b\|:b\in \B\}$. An element $b_0\in \B$ is said to be \emph{a best approximation to $a$ in $\B$} if $\|a-b_0\| = \dist(a,\B)$. It is a well known fact that $b_0$ is a  best approximation to $a$ in $\B$ if and only if there exists a functional $\psi\in\A^*$ such that $\psi(a-b_0)=\dist(a,\B)$ and $\psi(b)=0$ for all $b\in\B$ (see \cite[Theorem 1.1]{singer}).

Let $(\C(X),\|\cdot\|_\infty)$ be the $C^*$-algebra of real or complex continuous functions on a compact Hausdorff space $X$, where $\|f\|_\infty=\sup_{x\in X}|f(x)|.$ It was proved in Theorem 1.3 of \cite{singer} that if $f\in\C(X)$ and $\B$ is a subspace of $\C(X)$, then $g$ is a best approximation to $f$  in $\B$ if and only if there exists a regular Borel probability measure $\mu$ on $X$ such that the support of $\mu$ is contained in the set $\{x\in X: |(f-g)(x)| = \|f-g\|_\infty\}$ and $\int\limits_X \overline{(f-g)}h \,d\mu = 0 \text{ for all } h\in\B$. The condition that the support of $\mu$ is contained in the set $\{x\in X: |(f-g)(x)| = \|f-g\|_\infty\}$ is equivalent to $\int\limits_X |f-g|^2 \,d\mu = \|f-g\|_\infty^2$.



A \emph{positive linear map} from $\A$ to another $C^*$-algebra $\A_0$ is a linear map that maps positive elements of $\A$ to positive elements of $\A_0$. For $\bF=\bC$, a \emph{state} on $\A$ is a positive linear functional $\phi$ on $\A$ such that $\phi(1_{\A}) =1$. For $\bF=\bR$, an additional requirement for $\phi$ to be a state is that $\phi(a^*) = \phi(a)$ for all $a\in\A$. Let $\mathcal S_{\A}$ denotes the set of states on $\A$. Using Riesz Representation Theorem, the above characterization for best approximation in $\C(X)$ is equivalent to saying that there exists $\phi\in\mathcal S_{\C(X)}$ such that  \begin{equation} \phi(|f-g|^2) = \|f-g\|_\infty^2 \text{ and } \phi(\overline{(f-g)}h) = 0 \text{ for all } h\in\B.\end{equation} 

For $a\in \A$ and $\B$ a subspace of $\A$, $a$ is said to be \emph{Birkhoff-James orthogonal} to $\B$ (or \emph{$\B$-minimal})  if $\|a\|\leq \|a+b\|$ for all $b \in \B$. Note that this is equivalent to saying that $0$ is a best approximation to $a$ in $\B$. It was proved in Theorem 2 of \cite{williams} that $0$ is a best approximation to an element $a$ of a complex $C^*$-algebra $\A$ in $\bC 1_{\A}$ if and only if there exists $\phi\in \mathcal S_{\A}$ such that $\phi(a^*a)= \|a\|^2$ and $\phi(a)=0$.
Theorem 6.1 in \cite{rieffel 1} shows that if $\B$ is a $C^*$-subalgebra containing $1_{\A}$ of a complex $C^*$-algebra $\A$ and if $0$ is a best approximation to a Hermitian element $a$ of $\A$  in $\B$, then there exists $\phi\in S_{\A}$ such that $\phi(a^2)=\|a\|^2$ and $ \phi(ab+b^*a)=0$  for all $b\in\B$. In Proposition 4.10 of \cite{2013}, it was proved that for any elements $a$ and $b$ of a complex $C^*$-algebra $\A$, $0$ is a best approximation to $a$  in $\bC b$ if and only if there exists $\phi\in \mathcal S_{\A}$ such that $\phi(a^*a)= \|a\|^2$ and $\phi(a^*b)=0$. The main result of this article shows the existence of such a state for any element $a$ and for any subspace $\B$ of a $C^*$-algebra over $\bF$.  

\begin{theorem}\label{58} Let $a\in\A$. Let $\B$ be a subspace of $\A$.  Then $b_0$ is a best approximation to $a$ in $\B$ if and only if there exists $\phi\in\mathcal S_{\A}$ such that \begin{equation}\label{ss}\phi((a-b_0)^*(a-b_0)) = \|a-b_0\|^2 \text{ and } \phi(a^*b) = \phi(b_0^*b)\text{ for all }b\in\B.\end{equation}\end{theorem}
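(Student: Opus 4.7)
The plan is to derive condition \eqref{ss} from the classical Singer characterization recalled in the introduction, by passing through a cyclic (GNS-type) representation, and conversely to deduce the best approximation property from \eqref{ss} by a direct estimate using positivity of the state. Set $c:=a-b_0$. Since $\phi$ is linear, the second condition in \eqref{ss} is equivalent to $\phi(c^*b)=0$ for every $b\in\B$, and I will use this reformulation throughout.

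\textbf{Forward direction.} From Singer's theorem one gets $\psi\in\A^*$ with $\|\psi\|=1$, $\psi(c)=\|c\|$, and $\psi|_{\B}=0$. The first step is to realize $\psi$ as a matrix coefficient of a cyclic representation: there exist a cyclic representation $\pi:\A\to B(H)$, a unit cyclic vector $\xi\in H$, and a unit vector $\eta\in H$ such that $\psi(x)=\langle \pi(x)\xi,\eta\rangle$ for all $x\in\A$. This is a standard consequence of the polar decomposition of $\psi$ in the universal enveloping von Neumann algebra, followed by a GNS construction applied to the absolute value of $\psi$. Having this in hand, I define the vector state $\phi(x):=\langle \pi(x)\xi,\xi\rangle$. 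The hypothesis $\psi(c)=\|c\|$, combined with Cauchy--Schwarz and the operator-norm bound $\|\pi(c)\xi\|\le\|c\|$, forces equality in both and therefore $\pi(c)\xi=\|c\|\,\eta$. Consequently $\phi(c^*c)=\|\pi(c)\xi\|^2=\|c\|^2$ and, for every $b\in\B$,
\[
\phi(c^*b)=\langle \pi(b)\xi,\pi(c)\xi\rangle=\|c\|\,\langle \pi(b)\xi,\eta\rangle=\|c\|\,\psi(b)=0,
\]
which is \eqref{ss}.

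\textbf{Reverse direction.} Suppose $\phi\in\mathcal S_\A$ satisfies \eqref{ss}. For any $b\in\B$, write $a-b=c-(b-b_0)$ with $b-b_0\in\B$. Since $\phi$ is a state, $\phi(y)\le\|y\|$ whenever $y\ge 0$, so
\[
\|a-b\|^2=\|(a-b)^*(a-b)\|\ge \phi\bigl((a-b)^*(a-b)\bigr).
\]
Expanding and using $\phi(c^*(b-b_0))=0$ together with $\phi((b-b_0)^*c)=\overline{\phi(c^*(b-b_0))}=0$ (via the property $\phi(x^*)=\overline{\phi(x)}$, which holds in both the real and complex case by the definition of a state), the cross terms vanish and the remaining term $\phi((b-b_0)^*(b-b_0))$ is non-negative. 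Hence $\|a-b\|^2\ge \phi(c^*c)=\|c\|^2=\|a-b_0\|^2$, so $b_0$ is a best approximation to $a$ in $\B$.

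The main obstacle is the forward direction, specifically the step that converts the bounded functional $\psi$ produced by Singer's theorem into a pair $(\pi,\xi,\eta)$ with $\xi$ cyclic; this packages the standard theory of cyclic representations of $C^*$-algebras and the duality between $\A^*$ and vector functionals on the universal representation. Once that packaging is in place, both the equality case of Cauchy--Schwarz (giving $\pi(c)\xi=\|c\|\eta$) and the subsequent verification of \eqref{ss} are essentially algebraic. A minor additional point is ensuring in the real case that the vector state automatically satisfies $\phi(a^*)=\phi(a)$, which follows from working with a real cyclic representation.
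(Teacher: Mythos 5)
Your proposal is correct and follows essentially the same route as the paper: Hahn--Banach/Singer gives a norm-one functional annihilating $\B$ and attaining the distance at $a-b_0$, this functional is written as a matrix coefficient $\langle\,\cdot\,\rangle$ of a cyclic representation (the paper cites Rieffel's Lemma 3.3, whose proof is exactly the polar-decomposition argument in the enveloping von Neumann algebra that you sketch), equality in Cauchy--Schwarz yields $\pi(a-b_0)\xi=\|a-b_0\|\eta$, and the vector state $\phi(c)=\langle \xi\,|\,\pi(c)\xi\rangle$ satisfies \eqref{ss}; your converse is the same positivity-and-expansion estimate. The only divergence is the real scalar case, which the paper treats by complexifying $\A$ and taking the real part of the restriction of the resulting state, rather than by appealing, as you do somewhat loosely, to a real cyclic representation for which the matrix-coefficient lemma would need a real-algebra justification.
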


 For $\phi\in S_{\A}$ and $a_1,a_2\in \A$, define $\bra a_1|a_2\ket_{\phi} = \phi(a_1^*a_2)$. This is a semi-inner product on $\A$. Let $\|a_1\|_{\phi}=\bra a_1|a_1\ket_{\phi}^{1/2}$.  In this notation, the above theorem says that $b_0$ is a best approximation to $a$ in $\B$ if and only if there exists $\phi\in\mathcal S_{\A}$ such that
 $$\| a-b_0\|_{\phi}=\|a-b_0\| \text{ and } \bra a-b_0|b\ket_{\phi}=0 \text{ for all } b\in \B.$$

We note that \eqref{ss} is a Pythagoras theorem in the semi-inner product space $(\A,\bra \cdot|\cdot \ket_\phi)$. Consider the triangle with vertices $0, a, b_0$ in  $(\A, \bra\cdot|\cdot\ket_{\phi})$.
If $a\notin\B$, then \eqref{ss} gives that $\|a\|_{\phi}^2 = \|b_0\|_{\phi}^2 + \|a-b_0\|^2$ and $\bra a-b_0|b\ket_{\phi} = 0 \text{ for all } b\in\B$. 
If $\|b_0\|_{\phi} = 0$, then we have $\|a\|_{\phi} = \|a-b_0\|$. This means that the length of the base and the length of the perpendicular are $0$ and $\|a-b_0\|$, respectively. 
Suppose $\|b_0\|_{\phi} \neq 0$. Let $\theta_{\phi}^{a_1,a_2}=\cos^{-1}\left(\dfrac{\bra a_1|a_2\ket_{\phi}}{\|a_1\|_\phi \|a_2\|_\phi}\right)$ be the angle  between the vectors $a_1$ and $a_2$ in $(\A,\bra\cdot|\cdot\ket_{\phi})$, when $\|a_1\|_\phi, \|a_2\|_\phi\neq 0$. Then we have $\|a-b_0\|_{\phi} = \|a-b_0\|$ and $\theta_{\phi}^{a-b_0,b} = \pi/2$ for all $b\in\B$. In particular, the above triangle becomes a right angled triangle and the length of the perpendicular is $\|a-b_0\|$. 

As a consequence, we obtain a distance formula of an element $a\in \A$ from a subspace $\B$ of $\A$.

\begin{cor}\label{9999} Let $a\in\A$. Let $\B$ be a subspace of $\A$. If $b_0$ is a best approximation to $a$ in $\B$, then 
\begin{equation}
\dist(a,\B)^2 = \max\{\phi(a^*a) -\phi(b_0^*b_0) : \phi\in\mathcal S_{\A} \text{ and }\phi(a^*b) = \phi(b_0^*b) \text{ for all } b\in\B\}.\label{xx}
\end{equation} \end{cor}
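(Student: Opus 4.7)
The plan is to deduce the corollary directly from Theorem \ref{58} by exhibiting a state that realizes the alleged maximum and by showing that every admissible state gives a value at most $\dist(a,\B)^2$.

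First I would invoke Theorem \ref{58}: since $b_0$ is a best approximation to $a$ in $\B$, there exists a state $\phi_0\in\mathcal S_\A$ with $\phi_0((a-b_0)^*(a-b_0))=\|a-b_0\|^2=\dist(a,\B)^2$ and $\phi_0(a^*b)=\phi_0(b_0^*b)$ for all $b\in\B$. This $\phi_0$ is the natural candidate for attaining the maximum, and it lies in the feasible set appearing on the right-hand side of \eqref{xx}.

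Next I would carry out the key algebraic reduction, which works for any state $\phi$ in the feasible set. Expanding,
\begin{equation*}
\phi((a-b_0)^*(a-b_0))=\phi(a^*a)-\phi(a^*b_0)-\phi(b_0^*a)+\phi(b_0^*b_0).
\end{equation*}
Taking $b=b_0\in\B$ in the orthogonality condition yields $\phi(a^*b_0)=\phi(b_0^*b_0)$, a nonnegative real number. Using that $\phi$ is self-adjoint (i.e.\ $\phi(x^*)=\overline{\phi(x)}$), we also get $\phi(b_0^*a)=\overline{\phi(a^*b_0)}=\phi(b_0^*b_0)$. Substituting gives the identity
\begin{equation*}
\phi((a-b_0)^*(a-b_0))=\phi(a^*a)-\phi(b_0^*b_0),
\end{equation*}
which is exactly the Pythagoras identity flagged after Theorem \ref{58}.

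Finally, I would combine this identity with the standard bound $\phi(x^*x)\leq \|x^*x\|=\|x\|^2$ valid for any state. Applied to $x=a-b_0$ this gives $\phi(a^*a)-\phi(b_0^*b_0)\leq\|a-b_0\|^2=\dist(a,\B)^2$ for every feasible $\phi$, while the state $\phi_0$ from Theorem \ref{58} achieves equality. Hence the supremum in \eqref{xx} is attained and equals $\dist(a,\B)^2$. There is no real obstacle in this argument: once Theorem \ref{58} is in hand, the only subtlety is noticing that the orthogonality condition applied at $b=b_0$ collapses the cross terms so that $\phi((a-b_0)^*(a-b_0))$ matches the quantity $\phi(a^*a)-\phi(b_0^*b_0)$ appearing in the corollary.
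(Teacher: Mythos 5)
Your proposal is correct and follows essentially the same route as the paper: use the orthogonality condition at $b=b_0$ (together with self-adjointness of the state) to collapse $\phi((a-b_0)^*(a-b_0))$ to $\phi(a^*a)-\phi(b_0^*b_0)$, bound this by $\|a-b_0\|^2=\dist(a,\B)^2$ for every feasible state, and invoke Theorem \ref{58} to produce a state attaining equality. No gaps.
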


A special case of the above corollary is the below result by Williams \cite{williams}. He proved that for $a\in \A$, \begin{equation}\dist(a, \mathbb C1_\A)^2 =\max\{\phi(a^*a)-|\phi(a)|^2:\phi \in \mathcal S_{\A}\}.\label{rie}\end{equation} 
See \cite[Theorem 3.10]{rieffel} for a different proof of \eqref{rie}. For $n\times n$ complex matrices, a different proof has also been given in \cite[Theorem 9]{audenaert}.

As a direct consequence of Theorem \ref{58}, we get the following characterization of Birkhoff-James orthogonality to a subspace in a $C^*$-algebra.

\begin{cor}\label{10} Let $a\in\A$. Let $\B$ be a subspace of $\A$. Then $a$ is Birkhoff-James orthogonal to $\B$  if and only if there exists  $\phi\in\mathcal S_{\A}$ such that $\phi(a^*a) = \|a\|^2 \text{ and }\phi(a^*b) =0 \text{ for all }b\in\B.$\end{cor}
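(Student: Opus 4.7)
The plan is to deduce Corollary \ref{10} as an immediate specialization of Theorem \ref{58}. The bridge is the observation, already recorded in the introduction, that an element $a\in\A$ is Birkhoff-James orthogonal to a subspace $\B$ precisely when $0$ is a best approximation to $a$ in $\B$. Since $\B$ is a linear subspace, $0\in\B$, so applying Theorem \ref{58} with the choice $b_0=0$ is legal.

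With $b_0=0$, the two conditions in \eqref{ss} simplify at once. The norming condition $\phi((a-b_0)^*(a-b_0))=\|a-b_0\|^2$ becomes $\phi(a^*a)=\|a\|^2$, and the orthogonality relation $\phi(a^*b)=\phi(b_0^*b)$ for all $b\in\B$ becomes $\phi(a^*b)=\phi(0\cdot b)=0$ for all $b\in\B$. Thus Theorem \ref{58} specializes exactly to the characterization asserted in Corollary \ref{10}.

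For the converse direction, the same argument runs in reverse: the existence of a state $\phi$ with $\phi(a^*a)=\|a\|^2$ and $\phi(a^*b)=0$ for all $b\in\B$ is precisely the condition of Theorem \ref{58} with $b_0=0$, which certifies that $0$ is a best approximation to $a$ in $\B$, i.e.\ $a$ is Birkhoff-James orthogonal to $\B$.

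Since Theorem \ref{58} is the substantive result and has already been established, there is no real obstacle here; the entire content of the corollary lies in recognizing the substitution $b_0=0$ and in the algebraic simplification of the two conditions. The only point worth noting explicitly in the write-up is the equivalence \emph{$a$ is $\B$-minimal} $\iff$ \emph{$0$ is a best approximation to $a$ in $\B$}, which is immediate from the definition of Birkhoff-James orthogonality.
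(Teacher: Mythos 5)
Your proposal is correct and is exactly how the paper obtains this corollary: it is stated there as a direct consequence of Theorem \ref{58}, using the observation (made in the introduction) that Birkhoff-James orthogonality of $a$ to $\B$ means $0$ is a best approximation to $a$ in $\B$, and then setting $b_0=0$ in \eqref{ss}. Nothing further is needed.
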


Geometrically, this says that $a$ is Birkhoff-James orthogonal to $\mathcal B$ if and only if there exists $\phi\in \mathcal S_{\A}$ and a corresponding semi-inner product $\bra \cdot|\cdot\ket_{\phi}$ on $\A$ such that $\|a\|_{\phi} = \|a\|$ and $a$ is perpendicular to $\B$ in $(\A, \bra \cdot|\cdot\ket_{\phi})$. 


In Section \ref{proofs}, we give the proofs of Theorem \ref{58} and Corollary \ref{9999}. In Section \ref{applications}, we give some other applications of Theorem \ref{58}. In Theorem \ref{thm3.7}, we show that $0$ is a best approximation to $a$ in $\B$ if and only if $0$ is a best approximation to $a^*a$ in $a^*\B$. In Theorem \ref{8},  it is shown that for any element $a\in\A$ and a subspace $\B$ of $\A$, there exists a cyclic representation $(\hc, \pi, \xi)$ of $\A$ and a unit vector $\eta\in\hc$ such that  $\dist(a,\B) = \bra \eta | \pi(a)\xi \ket$ and $\langle  \eta | \pi(b)\xi \rangle=0$ for all $b\in\B$. In Theorem \ref{27}, a characterization for Birkhoff-James orthogonality of an element of a \emph{Hilbert $C^*$-module } to a subspace is given. It is proved that an element $e$ of a Hilbert $C^*$-module $\E$ over $\A$ is Birkhoff-James orthogonal to a subspace $\B$ of $\E$ if and only if there exists $\phi\in S_{\A}$ such that $\phi(\left<e,e\right>)= \|e\|^2 \mbox{ and } \phi(\left<e,b\right>) = 0$ for all $b\in\B$.  In \cite{rieffel}, it was desired to have the generalization of distance formula \eqref{rie} in terms of \emph{conditional expectations} from $\A$ to $\B$. In Section \ref{remarks}, we make some remarks on our progress towards obtaining this. Corollary 1.2, Corollary 1.3,  Theorem 3.5 and Equation \eqref{98} are mentioned  in the survey article \cite{GroverSingla}. We provide the complete details here.

\section{Proofs}\label{proofs}

Few notations are in order.  Let $\hc$ be a Hilbert space over $\bF$. The inner product is assumed to be conjugate linear in the first coordinate and linear in the second coordinate. Let $\mathscr B(\hc)$ be the $C^*$-algebra of bounded $\bF$-linear operators on $\hc$. The symbol $I$ denotes the identity in $\mathscr B(\hc)$. The triple  $(\hc, \pi, \xi)$ denotes a cyclic representation of $\A$ where $\|\xi\| =1$, $\pi: \A\rightarrow \mathscr B(\hc)$ is a $^*$-algebra map  satisfying $\pi(1_A) = I$ and closure of $\{\pi(a)\xi: a\in\A\}$ is $\hc$. 

\textit{Proof of Theorem \ref{58}} If $\phi$ is  a state such that \eqref{ss} holds, then for every $b\in\B$, 
\begin{eqnarray*}\|a-b_0\|^2 &=& \phi((a-b_0)^*(a-b_0))\\ 
&\leq&\phi((a-b_0)^*(a-b_0))  + \phi(b^*b)\\
&=&\phi((a-b_0 - b)^*(a-b_0 - b))\\
&\leq& \|a-b_0-b\|^2.\end{eqnarray*}

So $b_0$ is a best approximation to $a$ in $\B$. For the other side, first let us assume that $\A$ is a complex $C^*$-algebra. By the Hahn-Banach theorem,  there exists  $\psi\in \A^*$  such that $\|\psi\| = 1$, $\psi(a-b_0) = \dist(a,\B) = \|a-b_0\|$ and $\psi(b) = 0$ for all $b\in\B$.  By Lemma 3.3 of \cite{rieffel 1}, there exists a cyclic representation $(\hc, \pi, \xi)$ of $\A$ and a unit vector $\eta\in\hc$ such that  \begin{equation}
\psi(c) = \bra  \eta | \pi(c)\xi \ket \text{ for all }c\in\A.\label{eq1}\end{equation}
Now $\psi(a-b_0) = \bra \eta | \pi(a-b_0)\xi \ket = \|a-b_0\|$. So by using the condition for equality in Cauchy-Schwarz inequality, we obtain $\|a-b_0\|\eta = \pi(a-b_0)\xi$. Equation \eqref{eq1} gives $$\psi(c) = \dfrac{1}{\|a-b_0\|}\bra \pi(a-b_0)\xi | \pi(c)\xi \ket \text{ for all } c\in\A.$$ Therefore \begin{equation}
\bra \pi(a-b_0)\xi | \pi(a-b_0)\xi\ket = \|a-b_0\|^2 \label{eq2}
\end{equation} and \begin{equation}\bra  \pi(a-b_0)\xi | \pi(b)\xi \ket = 0 \text{ for all }b\in\B.\label{eq3}\end{equation}
Define $\phi\in \A^*$ as $\phi(c) = \bra  \xi | \pi(c)\xi \ket$. Then $\phi\in\mathcal S_{\A}$ and by \eqref{eq2} and \eqref{eq3}, we obtain \eqref{ss}.

Next, let $\A$ be a real $C^*$-algebra. Let $\A_c$ be the complexification of $(\A, \|\cdot\|)$ with the unique norm $\|\cdot\|_c$ such that $(\A_c, \|\cdot\|_c)$ is a $C^*$-algebra and the natural embedding of $\A$ into $\A_c$ is an isometry \cite[Corollary 15.4]{Goodearl}. From the above case, there exists $\psi\in S_{\A_c}$ such that $ \psi((a-b_0)^*(a-b_0)) = \|a-b_0\|^2$ and $\psi(a^*b) = \psi(b_0^*b) \text{ for all }b\in\B.$ Let $\phi = \text{Re } \psi|_{\A}$.  Then $\phi\in S_{\A}$, $\phi((a-b_0)^*(a-b_0)) = \|a-b_0\|^2$ and $\phi(a^*b) = \phi(b_0^*b) \text{ for all } b\in\B$.
\qed

Another proof of Theorem \ref{58}, in the case when $\A$ is a complex $C^*$-algebra, can be given as follows.  The importance of this approach is that it indicates that proving the theorem when $\B$ is a one dimensional subspace is sufficient.
Since $b_0$ is a best approximation to $a$ in $\B$, $0$ is a best approximation to $a-b_0$ in $\B$. So without loss of generality, we assume $b_0 =0$. 
For $b\in\B$,  we have $\|a\|\leq\|a+\lambda b\|$ for all $\lambda\in\bC$. By Proposition 4.1 of \cite{2013}, there exists $\phi_b \in\mathcal S_{\A}$ such that $\phi_b(a^*a) = \|a\|^2$ and $\phi_b(a^*b) = 0$. 
Let $\N = \{\alpha a^*a + \beta 1_{\A} + a^*b: \alpha, \beta\in\bC$, $b\in\B\}$, the subspace generated by $a^*a$, $1_{\A}$ and $a^*\B$. Define $\psi: \N \longrightarrow \bC$ as $\psi(\alpha a^*a + \beta 1_{\A} + a^*b) = \alpha\|a\|^2 + \beta$ for all $\alpha,\beta\in\bC$ and $b\in\B$. 
To see that $\psi$ is well defined, note that for any $b\in B$ we have $\phi_b(\alpha a^*a + \beta 1_{\A} + a^* b) = \alpha\|a\|^2 + \beta$. Since $\|\phi_b\| = 1$, we get \begin{equation}\label{93}|\alpha\|a\|^2 + \beta| \leq \|\alpha a^*a + \beta 1_{\A} + a^* b\|.\end{equation} Thus $\alpha a^*a + \beta 1_{\A} + a^* b = 0$ implies $\alpha\|a\|^2 + \beta = 0.$ Clearly $\psi$ is a linear map and equation \eqref{93} shows that $\|\psi\| \leq 1$. Since $\psi(1_{\A}) =1$, we have $\|\psi\| = 1$. 
By the Hahn-Banach theorem, there exists a linear functional $\phi: \A \rightarrow \bC$ such that $\|\phi\|= 1$ and $\phi|_{\N} = \psi$. Since $\|\phi\| = 1 = \phi(1_{\A})$, using Theorem II.6.2.5(ii) of \cite{Blackadar}, we get that $\phi\in\mathcal S_{\A}$. By definition, $\phi$ satisfies the required conditions.

\textit{Proof of Corollary \ref{9999}} Let $\phi\in\mathcal S_{\A}$ be such that  $\phi(a^*b) = \phi(b_0^*b)$ for all $b\in\B$. In particular we have $\phi(a^*b_0) = \phi(b_0^*b_0)$. So
$ \phi((a-b_0)^*(a-b_0)) = \phi(a^*a) - \phi(b_0^*b_0).$ 
 Since $\phi((a-b_0)^*(a-b_0)) \leq \norm{a-b_0}^2 = \dist(a, \B)^2$, we have
$$\phi(a^*a) - \phi(b_0^*b_0) \leq \dist(a, \B)^2.$$ 
This gives \begin{equation*}
\sup\{\phi(a^*a) -\phi(b_0^*b_0) : \phi\in \mathcal S_{\A}, \phi(a^*b) = \phi(b_0^*b) \text{ for all } b\in\B\}\leq \dist(a,\B)^2 . 
\end{equation*}
By Theorem \ref{58}, there exists $\phi\in S_{\A}$ such that 
\begin{equation*}\dist(a,\B)^2 =\phi(a^*a) -\phi(b_0^*b_0) \text{ and }\phi(a^*b) = \phi(b_0^*b) \text{ for all } b\in\B.\label{xxx}\end{equation*} This completes the proof.
\qed

\section{Applications}\label{applications}

An interesting fact arises out of Corollary \ref{10}, which is worth noting separately. 
\begin{theorem}\label{thm3.7}Let $a\in\A$. Let $\B$ be a subspace of  $\A$.  Then $a$ is Birkhoff-James orthogonal to $\B$ if and only if  $a^*a$ is Birkhoff-James orthogonal to $a^*\B$.
\end{theorem}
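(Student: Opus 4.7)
The plan is to handle the two directions separately, using the state characterization of Corollary \ref{10} for the nontrivial direction and only submultiplicativity of the $C^*$-norm for the easy one.

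For the easy direction, I would argue as follows: if $a^*a$ is Birkhoff-James orthogonal to $a^*\B$, then for every $b\in\B$ we have $\|a\|^2 = \|a^*a\| \leq \|a^*a + a^*b\| = \|a^*(a+b)\| \leq \|a\|\,\|a+b\|$. When $a \neq 0$ this forces $\|a\| \leq \|a+b\|$, so $a$ is Birkhoff-James orthogonal to $\B$, and the case $a=0$ is trivial.

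For the forward direction, suppose $a$ is Birkhoff-James orthogonal to $\B$. By Corollary \ref{10} there exists $\phi \in \mathcal{S}_\A$ with $\phi(a^*a) = \|a\|^2$ and $\phi(a^*b) = 0$ for all $b \in \B$. My aim is to show that the \emph{same} $\phi$ witnesses the Birkhoff-James orthogonality of $a^*a$ to $a^*\B$ via Corollary \ref{10}; no new state needs to be constructed. The norm condition will come from two uses of the Cauchy-Schwarz inequality for states, $|\phi(x^*y)|^2 \leq \phi(x^*x)\phi(y^*y)$. Taking $x = 1_\A$ and $y = a^*a$ gives $\|a\|^4 = \phi(a^*a)^2 \leq \phi((a^*a)^2) \leq \|(a^*a)^2\| = \|a\|^4$, hence $\phi((a^*a)^2) = \|a^*a\|^2$.

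For the orthogonality condition, I would set $h = a^*a - \|a\|^2 \cdot 1_\A$, which is self-adjoint, and apply Cauchy-Schwarz with $x = h$ and $y = a^*b$ to obtain $|\phi(h \cdot a^*b)|^2 \leq \phi(h^2)\phi(b^*aa^*b)$. Expanding $\phi(h^2) = \phi((a^*a)^2) - 2\|a\|^2\phi(a^*a) + \|a\|^4$ and plugging in the equalities above yields $\phi(h^2) = 0$, so $\phi(h \cdot a^*b) = 0$, which rearranges to $\phi(a^*a \cdot a^*b) = \|a\|^2 \phi(a^*b) = 0$. Corollary \ref{10} applied to the pair $(a^*a,\, a^*\B)$ then concludes. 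The step I expect to be the main conceptual hurdle is recognizing that the original state does the job for the new pair; once that is guessed, both required identities drop out of Cauchy-Schwarz with no further work.
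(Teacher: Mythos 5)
Your proof is correct, and its skeleton matches the paper's: the backward direction is identical (submultiplicativity, $\|a\|^2\leq\|a^*\|\,\|a+b\|$), and in the forward direction both you and the paper invoke Corollary \ref{10} to get a state $\phi$ with $\phi(a^*a)=\|a\|^2$ and $\phi(a^*b)=0$. Where you diverge is in how you finish. The paper concludes in one line without any further use of Corollary \ref{10}: since $\phi(a^*a+a^*b)=\|a\|^2$ and $\|\phi\|=1$, one gets $\|a^*a\|=\|a\|^2=|\phi(a^*a+a^*b)|\leq\|a^*a+a^*b\|$ directly, i.e.\ $\phi$ is simply used as a norming functional for $a^*a+a^*b$. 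You instead verify that the same $\phi$ satisfies the full hypotheses of Corollary \ref{10} for the pair $(a^*a,\,a^*\B)$ — namely $\phi((a^*a)^2)=\|a^*a\|^2$ via Cauchy--Schwarz against $1_\A$, and $\phi(a^*a\,a^*b)=0$ via Cauchy--Schwarz with $h=a^*a-\|a\|^2 1_\A$ and $\phi(h^2)=0$ — and then apply the sufficiency direction of Corollary \ref{10}. Both computations are correct (and valid over $\bR$ as well, since $\phi(x^*y)$ is a semi-inner product for real states). Your route is longer but buys a genuinely extra piece of information the paper's argument does not record: the very state witnessing the orthogonality of $a$ to $\B$ also witnesses, in the sense of Corollary \ref{10}, the orthogonality of $a^*a$ to $a^*\B$; the paper's route is more economical, needing only $|\phi(c)|\leq\|c\|$.
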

\begin{proof} First let $a$  is Birkhoff-James orthogonal to $\B$. Then by Corollary \ref{10}, there exists $\phi\in S_{\A}$ such that $\phi(a^*a)=\|a\|^2$ and $\phi(a^*b)=0$ for all $b\in \B$. So for $b\in \B$,  $\phi(a^*a+a^*b)=\|a\|^2$. Since $\|\phi\| = 1$, we get $\|a^*a\| = \|a\|^2\leq\|a^*a+a^*b\|.$ 
Conversely, suppose $a^*a$ is Birkhoff-James orthogonal to $a^*\B$, that is, $\|a^*a\|\leq\|a^*a+a^*b\|$ for every $b\in \B$. This implies $\|a\|^2\leq\|a^*\|\|a+b\|$ and thus $\|a\|\leq \|a+b\|$ for all $b\in \B$. \end{proof}

 We now show that Theorem 1 of \cite{2014} can also be proved using Corollary \ref{10}. We first prove the following lemma, which is of independent interest. The  proof of the lemma is along the same lines as a portion of the proof of Theorem 1 of \cite{Bhatia}.
For $u, v\in\hc$, $u\bar{\tens{}} v$ will denote the finite rank operator of rank one on $\hc$ defined as $u\bar{\tens{}} v(w) = \bra v | w\ket u$ for all $w\in\hc$.

\begin{lemma}\label{99} Let $A\in\mathscr B(\hc)$. Let $T$ be a positive trace class operator with $\|T\|_1 =1$ and $\tr(AT) = \|A\|$. Then there is an at most countable index set $\mathcal J$, a set of positive numbers $\{s_j : j\in\mathcal J\}$ and an orthonormal set $\{u_j: j\in\mathcal J\} \subseteq \text{Ker}(T)^{\bot}$  such that \begin{enumerate}
\item[(i)]  $\sum\limits_{j\in\mathcal J} s_j =1$ ,
\item[(ii)]$Au_j = \|A\|u_j$ for each $j\in \mathcal J$, \\
\item[(iii)] $T= \sum\limits_{j\in\mathcal J} s_j u_j\bar{\tens{}} u_j$.
\end{enumerate}\end{lemma}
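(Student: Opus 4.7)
The plan is to combine the spectral theorem for positive trace class operators with the equality case of the Cauchy--Schwarz inequality. Since $T$ is positive and trace class, it is in particular compact and self-adjoint, so the spectral theorem yields an at most countable orthonormal family $\{u_j:j\in\mathcal J\}\subseteq \text{Ker}(T)^{\bot}$ of eigenvectors with corresponding positive eigenvalues $s_j$. This immediately gives (iii), namely $T=\sum_{j\in\mathcal J} s_j\, u_j\bar{\tens{}} u_j$, and the hypothesis $\|T\|_1=\tr(T)=1$ forces $\sum_j s_j=1$, which is (i).

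For (ii), the first step is to rewrite the hypothesis $\tr(AT)=\|A\|$ in terms of this eigendecomposition. Extending $\{u_j\}$ to an orthonormal basis of $\hc$ by vectors in $\text{Ker}(T)$ and computing the trace in that basis gives
\[
\tr(AT)=\sum_{j\in\mathcal J} s_j\,\bra u_j\,|\,Au_j\ket=\|A\|.
\]
Each summand satisfies $|\bra u_j\,|\,Au_j\ket|\le \|A\|$, while the weights $s_j$ are positive with $\sum_j s_j=1$, so $\|A\|$ is realised as a convex combination of scalars lying in the closed disk of radius $\|A\|$ centred at the origin. Because $\|A\|$ is an extreme point of that disk, each $\bra u_j\,|\,Au_j\ket$ must equal $\|A\|$.

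The final step is the equality case of Cauchy--Schwarz: from $\bra u_j\,|\,Au_j\ket=\|A\|$ together with $\|u_j\|=1$ and $\|Au_j\|\le\|A\|$, equality must hold throughout in $|\bra u_j\,|\,Au_j\ket|\le\|u_j\|\,\|Au_j\|\le\|A\|$, so $Au_j$ is a scalar multiple of $u_j$, and evaluating the inner product identifies the scalar as $\|A\|$. This gives $Au_j=\|A\|u_j$, which is (ii).

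I do not anticipate a serious obstacle. The one step requiring a small amount of care is the trace identity $\tr(AT)=\sum_j s_j\bra u_j\,|\,Au_j\ket$ when $\mathcal J$ is infinite and $A$ is merely bounded, but this is a standard consequence of $T$ being trace class combined with the spectral decomposition above.
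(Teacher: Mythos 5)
Your proposal is correct and follows essentially the same route as the paper: spectral decomposition of the positive trace class operator $T$, the trace identity $\tr(AT)=\sum_j s_j\bra u_j|Au_j\ket$, the observation that a convex combination of points of modulus at most $\|A\|$ equaling $\|A\|$ forces each term to equal $\|A\|$ (the paper phrases this as a chain of inequalities all being equalities), and finally the equality case of Cauchy--Schwarz to conclude $Au_j=\|A\|u_j$. The only cosmetic difference is that you identify the common value as $\|A\|$ directly via the extreme point argument, whereas the paper first records $|\bra u_j|Au_j\ket|=\|Au_j\|=\|A\|$ and then pins down the scalar.
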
 
\begin{proof} Using Corollary 5.4  of \cite[Ch. II]{conway}, there exists a sequence of real numbers $s_1, s_2 \dots$ with orthonormal basis $\{u_1, u_2, \dots\}$ of $\text{Ker}(T)^{\bot}$ such that $T= \sum\limits_{i=1}^{\infty} s_i u_i\bar{\tens{}} u_i$. Since $T$ is positive, $s_i$ are non-negative. And $\|T\|_1 = 1$ implies  $\sum\limits_{i=1}^{\infty} s_i =1$.  Now $AT =\sum\limits_{i=1}^{\infty} s_i Au_i\bar{\tens{}}u_i$. Let $\mathcal J = \{i\in\bN : s_i\neq 0\}$. Then $\sum\limits_{j\in\mathcal J} s_j =1$ and $AT =\sum\limits_{j\in\mathcal J} s_j Au_j\bar{\tens{}}u_j$. {So $\tr(AT) = \sum\limits_{j\in\mathcal J} s_j \tr(Au_j\bar{\tens{}}u_j) = \sum\limits_{j\in\mathcal J} s_j \left\bra u_j|Au_j\right\ket$.}


Now
\begin{eqnarray*}
\|A\| &=& \tr(AT) = \sum\limits_{j\in\mathcal J} s_j \bra u_j|Au_j\ket ={\bigg|}\sum\limits_{j\in\mathcal J} s_j \bra u_j| Au_j \ket{\bigg|} \leq \sum\limits_{j\in\mathcal J} s_j{\bigg|}\bra u_j| Au_j\ket{\bigg|} \leq\sum\limits_{j\in\mathcal J} s_j\|Au_j\|\\ &\leq&\sum\limits_{j\in\mathcal J} s_j\|A\| =  \|A\|. \end{eqnarray*}
So\begin{eqnarray*} \sum\limits_{j\in\mathcal J} s_j{\bigg|}\bra u_j| Au_j\ket{\bigg|} = \sum\limits_{j\in\mathcal J} s_j\|Au_j\| =  \|A\|.\end{eqnarray*}
Therefore \begin{eqnarray*}0=\sum\limits_{j\in\mathcal J} s_j \left(\|A\| - {\bigg|}\bra u_j|Au_j\ket{\bigg|}\right) =  \sum\limits_{j\in\mathcal J} s_j\left(\|Au_j\| - {\bigg|}\bra u_j|Au_j\ket{\bigg|}\right).\end{eqnarray*}
Since $s_j>0$ for all $j\in \mathcal J$, we get \begin{equation}
\|A\| = {\bigg|}\bra u_j| Au_j\ket{\bigg|} = \|Au_j\| \text{ for all } j\in\mathcal J.\label{15}
\end{equation} 
By the condition of equality in Cauchy-Schwarz inequality, for every $j\in \mathcal J$ there exists $\alpha_j\in\bC$ such that $\alpha_j Au_j = u_j$. And using \eqref{15}, we get $Au_j = \|A\|u_j$. This completes the proof. 
\end{proof}

Let $\mathbb M_n(\bF)$ be the $C^*$-algebra of $n\times n$ matrices with entries in $\bF$. A \emph{density matrix} $A\in \mathbb M_n(\bF)$ is a positive element in $\mathbb M_n(\bF)$ with $\tr(A) = 1$.
A different proof of Theorem 1 in \cite{2014} follows.

\begin{theorem}\label{41}\cite[Theorem 1]{2014} \textit{Let $A\in \mathbb M_n(\bF)$. Let $m(A)$ be the multiplicity of the maximum singular value $\|A\|$ of $A$. Let $\B$ be a subspace of $\mathbb M_n(\bF)$. Then $A$ is Birkhoff-James orthogonal to $\B$ if and only if there exists a density matrix $T\in \mathbb M_n(\bF)$ of rank at most $m(A)$ such that $A^*AT = \|A\|^2T$ and $\tr(B^*AT) = 0$ for all $B\in \B$.}\end{theorem}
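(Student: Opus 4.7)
The plan is to derive Theorem \ref{41} as a quick consequence of Corollary \ref{10} combined with Lemma \ref{99}. Since every state on $\mathbb{M}_n(\bF)$ is of the form $\phi(X) = \tr(TX)$ for a unique density matrix $T$, Corollary \ref{10} produces the desired $T$ essentially for free; Lemma \ref{99} will then supply the spectral structure $A^*AT = \|A\|^2 T$ together with the rank bound.

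For the forward direction, I would take the state given by Corollary \ref{10} from the hypothesis that $A$ is Birkhoff-James orthogonal to $\B$, write it as $\phi(X) = \tr(TX)$ with $T$ a density matrix, and read off $\tr(A^*AT) = \|A\|^2$ and $\tr(A^*BT) = 0$ for all $B \in \B$. Since $T$ is Hermitian, the second identity is equivalent to $\tr(B^*AT) = 0$ via complex conjugation, because $\overline{\tr(A^*BT)} = \tr((A^*BT)^*) = \tr(TB^*A) = \tr(B^*AT)$. To extract the eigenvalue equation $A^*AT = \|A\|^2 T$, I would apply Lemma \ref{99} with the positive operator $A^*A$ (which satisfies $\|A^*A\| = \|A\|^2$) playing the role of the operator ``$A$'' in that lemma; the hypothesis of the lemma then reads $\tr((A^*A)T) = \|A^*A\|$ and is exactly the identity at hand. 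The lemma supplies an orthonormal set $\{u_j\}_{j \in \mathcal{J}}$ and positive scalars $s_j$ with $T = \sum_j s_j\, u_j \bar{\otimes} u_j$ and $A^*A u_j = \|A\|^2 u_j$, which multiplies out to $A^*AT = \|A\|^2 T$. The vectors $u_j$ all lie in the $\|A\|^2$-eigenspace of $A^*A$, whose dimension equals the multiplicity of $\|A\|$ as a singular value of $A$, and hence $\operatorname{rank}(T) \le m(A)$.

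The converse is immediate: given such a $T$, define $\phi(X) = \tr(TX)$ on $\mathbb{M}_n(\bF)$, which is a state. The relation $A^*AT = \|A\|^2 T$ combined with $\tr(T) = 1$ yields $\phi(A^*A) = \tr(A^*AT) = \|A\|^2$, while the conjugation identity above converts $\tr(B^*AT) = 0$ into $\phi(A^*B) = \tr(A^*BT) = 0$. Corollary \ref{10} then concludes that $A$ is Birkhoff-James orthogonal to $\B$. I do not foresee any real obstacle; the only points needing care are the choice to apply Lemma \ref{99} to the self-adjoint operator $A^*A$ rather than to $A$ itself, and the identification of $m(A)$ with the dimension of the $\|A\|^2$-eigenspace of $A^*A$.
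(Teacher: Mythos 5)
Your proposal is correct and follows essentially the same route as the paper: apply Corollary \ref{10} to get a state, realize it as $\phi(X)=\tr(TX)$ for a density matrix $T$, and then invoke Lemma \ref{99} with the positive operator $A^*A$ (of norm $\|A\|^2$) to obtain $A^*AT=\|A\|^2T$ and the rank bound $\operatorname{rank}(T)\leq m(A)$. The only difference is that you spell out the conjugation step relating $\tr(A^*BT)=0$ to $\tr(B^*AT)=0$ and the easy converse via Corollary \ref{10}, details the paper leaves implicit.
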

\begin{proof}By Corollary \ref{10}, there exists  a density matrix 
$T$ such that $\tr(A^*AT) = \|A\|^2$ and $\tr(B^*AT) = 0$ for all $B\in \B$. Using Lemma \ref{99}, there exists $s_1,\ldots, s_m$ and a set of orthonormal vectors $\{u_1, \ldots, u_m\}$ such that $\sum\limits_{j=1}^m s_j =1$, $A^*Au_j = \|A\|^2u_j$ for every $j=1,\ldots,m$ and $T= \sum\limits_{j=1}^m s_j u_j\bar{\tens{}} u_j$. Clearly $\text{rank } T\leq m\leq m(A)$ and $A^*AT = \|A\|^2T$. \end{proof}

It is worth noting that from the proof of Theorem \ref{41}, we get that $A^*AT = \|A\|^2T$ is equivalent to $\tr(A^*AT) = \|A\|^2$, where $A, T\in \mathbb M_n(\bF)$ and $T$ is a density matrix. This supplements Remark 1 of \cite{2014}.

Next we note that the idea of the proof of Theorem \ref{58} also proves the following generalization of Corollary 2.8 in \cite{2012}.

\begin{theorem}\label{8} Let $a\in\A$. Let $\B$ be a subspace of $\A$. Then there exists a cyclic representation $(\hc, \pi, \xi)$ of $\A$ and a unit vector $\eta\in\hc$ such that  $\dist(a,\B) = \bra \eta | \pi(a)\xi \ket$ and $\langle  \eta | \pi(b)\xi \rangle=0$ for all $b\in\B$.\end{theorem}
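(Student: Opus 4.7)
The plan is to adapt the Hahn--Banach plus cyclic-representation step from the first proof of Theorem \ref{58}, but without assuming the existence of a best approximation. In particular, the required functional will be constructed directly from the distance, not from a minimizer.

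First assume $\bF = \bC$ and $a \notin \overline{\B}$, so $d := \dist(a,\B) > 0$. On the linear span $\B + \bC a$, define $\psi_0(b + \lambda a) = \lambda\,d$ for $b \in \B$, $\lambda \in \bC$. For $\lambda \neq 0$ the estimate
\[
|\psi_0(b + \lambda a)| = |\lambda|\,d \leq |\lambda|\,\|a + \lambda^{-1} b\| = \|\lambda a + b\|
\]
gives $\|\psi_0\| \leq 1$, and choosing a minimizing sequence $(b_n) \subset \B$ with $\|a - b_n\| \to d$ (which exists by definition of $d$) shows $\|\psi_0\| = 1$. The Hahn--Banach theorem then extends $\psi_0$ to $\psi \in \A^*$ with $\|\psi\| = 1$, $\psi|_\B = 0$, and $\psi(a) = d$.

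Next, I would invoke Lemma 3.3 of \cite{rieffel 1} (the same lemma used in the proof of Theorem \ref{58}) applied to the norm-one functional $\psi$: this produces a cyclic representation $(\hc, \pi, \xi)$ of $\A$ with $\|\xi\| = 1$ and a unit vector $\eta \in \hc$ such that $\psi(c) = \bra \eta | \pi(c)\xi \ket$ for every $c \in \A$. Specialising $c = a$ yields $\dist(a,\B) = \bra \eta | \pi(a)\xi \ket$, while $c = b \in \B$ yields $\bra \eta | \pi(b)\xi \ket = 0$, which is exactly the statement. The real case $\bF = \bR$ would then be handled by complexifying $\A$ to $\A_c$ and restricting, precisely as in the second half of the proof of Theorem \ref{58}.

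The main substantive step is the norm estimate for $\psi_0$ and the identification of Rieffel's lemma as the bridge from a scalar functional to a cyclic representation; everything else is bookkeeping. A minor caveat to address is the degenerate case $a \in \overline{\B}$, where $d = 0$ forces $\psi_0 \equiv 0$ and the Hahn--Banach step produces nothing useful. Provided $\overline{\B} \neq \A$, one instead picks any norm-one functional $\psi$ vanishing on $\overline{\B + \bF a}$ (which exists by Hahn--Banach on the quotient) and runs the cyclic-representation step as above; both required identities become $0 = 0$.
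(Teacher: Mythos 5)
Your proposal is correct and takes essentially the same route as the paper: the paper's proof likewise uses the Hahn--Banach theorem to produce $\psi\in\A^*$ with $\|\psi\|=1$, $\psi(a)=\dist(a,\B)$ and $\psi|_{\B}=0$, and then applies Lemma 3.3 of \cite{rieffel 1} to realize $\psi(c)=\bra\eta|\pi(c)\xi\ket$. Your explicit construction of $\psi_0$ on $\B+\bC a$ and the remarks on the degenerate case $a\in\overline{\B}$ simply fill in details the paper treats as standard.
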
 
\begin{proof} By the Hahn-Banach theorem,  there exists  $\psi\in \A^*$  such that $\|\psi\| = 1$, $\psi(a) = \dist(a,\B)$ and $\psi(b) = 0$ for all $b\in\B$.  By Lemma 3.3 of \cite{rieffel 1}, there exists a cyclic representation $(\hc, \pi, \xi)$ of $\A$ and a unit vector $\eta\in\hc$ such that $\psi(c) = \bra  \eta | \pi(c)\xi \ket \text{ for all }c\in\A.$
\end{proof}

It was shown in \cite{Bhatia} that for any $A\in \mathbb M_n(\bC)$ $$\dist(A,\bC 1_{\A}) = \max\{|\bra y | Ax \ket| : x, y\in\bC^n, \|x\|= \|y\| = 1 \text{ and } x\bot y\}.$$ Using Theorem \ref{8}, we obtain a similar formula for $\dist (a, \B)$, in the general case of a unital $C^*$-algebra $\A$ and $\bC 1_{\A}$ replaced with any subspace $\B$. We have
\begin{align}
\dist(a,\B)&= \max\left\{\bigg|\bra \eta | \pi(a) \xi \ket\bigg|: (\hc, \pi, \xi) \text{ is a cyclic representation of } \A, \eta\in\hc, \right.\\
&  \hspace{4cm} \|\eta\|=1 \text{ and }  \langle \eta | \pi(b)\xi\rangle=0 \text{ for all } b\in \B\bigg\}.\nonumber\end{align}

Under the restriction that best approximation to $a$ in $\B$ exists, the above formula was obtained in \cite[Theorem 4.3]{GroverSingla}. 
Another formula for $\dist (a, \B)$ when $\B$ is a $C^*$-subalgebra of $\A$ was proved in Theorem 3.2 of \cite{rieffel 1}. For more distance formulas, see \cite{Bhatia} and \cite{2014} for a discussion in  $\mathbb M_n(\bC)$, \cite{2012} and \cite{paul} for $\mathscr B(\hc)$ and \cite{2012} for general complex $C^*$-algebras and Hilbert $C^*$-modules over a complex $C^*$-algebra. 

A Hilbert $C^*$-module $\E$ over $\A$ is a right $\A$-module with a function $\left<\cdot,\cdot\right>:\E\times\E\rightarrow\A$, known as $\A$-valued semi-inner product, with the following properties for $\xi, \eta, \zeta\in\E, a\in\A, \lambda\in\bC :$
\begin{enumerate}
\item $\left<\xi,\eta+\zeta\right> = \left<\xi,\eta+\zeta\right> \text{ and } \left<\xi,\lambda\eta\right> = \lambda \left<\xi,\eta\right>$,
\item $\left<\xi,\eta a\right> =  \left<\xi,\eta\right> a$,
\item $\left<\xi,\eta\right> =  \left<\eta,\xi\right>^*$,
\item $\left<\xi,\xi\right>$ is a positive element of $\A$.
\end{enumerate}
Let $\hk$ be a Hilbert space. Let $\mathscr B(\hc, \hk)$ denotes the space of bounded $\bF$-linear operators from $\hc$ to $\hk$. It is a Hilbert $C^*$-module over $\mathscr B(\hc)$ with $\left<A,B\right> = A^*B$ for all $A, B\in \mathscr B(\hc, \hk)$. The below result extends Theorem 2.7 of \cite{2012} and Theorem 4.4 of \cite{2013}. 

\begin{theorem}\label{27} Let $e\in \E$. Let $\B$ be a subspace of $\E$. Then $e$ is Birkhoff-James orthogonal to $\B$ in the Banach space $\E$ if and only if there exists $\phi\in S_{\A}$ such that $\phi(\left<e,e\right>)= \|e\|^2 \mbox{ and } \phi(\left<e,b\right>) = 0$ for all $b\in\B$.
\end{theorem}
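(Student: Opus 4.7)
The plan is to establish both directions of the characterization; the forward direction is essentially a repetition of the semi-inner product argument used after Theorem \ref{58}, while the main work lies in the converse.

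For the sufficiency, given $\phi\in\mathcal S_\A$ with the stated properties, define the semi-inner product $\bra x\mid y\ket_\phi := \phi(\left<x,y\right>)$ on $\E$ and its semi-norm $\|x\|_\phi$. Since $\phi$ is a state, $\|x\|_\phi^2 \le \|\left<x,x\right>\| = \|x\|^2$ for every $x\in\E$. A Pythagoras-type expansion using $\phi(\left<e,b\right>) = 0$ and $\phi(\left<b,e\right>) = \overline{\phi(\left<e,b\right>)} = 0$ gives $\|e+b\|_\phi^2 = \|e\|_\phi^2 + \|b\|_\phi^2 \ge \|e\|^2$ for every $b\in\B$, and combined with the norm comparison one obtains $\|e+b\| \ge \|e+b\|_\phi \ge \|e\|$.

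For the converse, the strategy is to embed $\E$ into a unital $C^*$-algebra in which Birkhoff-James orthogonality to $\B$ is preserved, and then pull a state on that algebra back to $\A$. Consider the Hilbert $C^*$-module direct sum $\widetilde\E := \E\oplus\A$, and let $\U$ denote the unital $C^*$-algebra of adjointable right $\A$-linear operators on $\widetilde\E$. For $x\in\E$, the off-diagonal operator $T_x(e',a) := (xa, 0)$ lies in $\U$, with adjoint $T_x^*(e',a) = (0, \left<x,e'\right>)$. A short calculation shows $\|T_x\| = \|x\|$ and $T_x^*T_y = \iota(\left<x,y\right>)$, where $\iota\colon\A\hookrightarrow\U$ is the corner embedding $\iota(c)(e',a) := (0,ca)$. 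Since $T_e + T_b = T_{e+b}$, we have $\|T_e + T_b\| = \|e+b\| \ge \|e\| = \|T_e\|$ for every $b\in\B$, so $T_e$ is Birkhoff-James orthogonal to $\{T_b : b\in\B\}$ in $\U$.

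Corollary \ref{10} applied in $\U$ furnishes $\Phi\in\mathcal S_\U$ with $\Phi(T_e^*T_e) = \|e\|^2$ and $\Phi(T_e^*T_b) = 0$ for every $b\in\B$. Define $\phi(a) := \Phi(\iota(a))$; this is positive and linear, and the required identities $\phi(\left<e,e\right>) = \|e\|^2$ and $\phi(\left<e,b\right>) = 0$ follow from $T_x^*T_y = \iota(\left<x,y\right>)$. The key step is showing $\phi$ is unital: assuming $e\ne 0$ (the case $e=0$ being trivial), positivity of $\|e\|^2 1_\A - \left<e,e\right>$ in $\A$ gives the operator inequality $\iota(\left<e,e\right>) \le \|e\|^2\,\iota(1_\A)$ in $\U$, so applying $\Phi$ yields $\Phi(\iota(1_\A)) \ge 1$; since $\iota(1_\A)$ is a projection dominated by $1_\U$, one concludes $\Phi(\iota(1_\A)) = 1$, i.e., $\phi(1_\A) = 1$. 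The main obstacle is precisely this final coupling: one must choose an enveloping unital $C^*$-algebra in which the norm of $e$, the $\A$-valued inner products, and the resulting state on $\A$ all arise compatibly from a single Birkhoff-James orthogonality statement, and the adjointable operators on $\E\oplus\A$ meet all three requirements simultaneously.
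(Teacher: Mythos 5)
Your proof is correct, but it takes a genuinely different route from the paper. The paper proves the theorem first for the concrete module $\E=\mathscr B(\hc,\hk)$ over $\mathscr B(\hc)$, using the embedding $t\mapsto\tilde t$ as an off-diagonal $2\times 2$ operator block on $\hc\oplus\hk$, applying Corollary \ref{10} in $\mathscr B(\hc\oplus\hk)$ and compressing the resulting state to the corner; the general case is then obtained by invoking Lemma 4.3 of \cite{2013}, which represents an arbitrary Hilbert $C^*$-module isometrically and inner-product-compatibly inside some $\mathscr B(\hc,\hk)$. You instead work intrinsically: you form the module $\E\oplus\A$, pass to the unital $C^*$-algebra $\mathcal{L}(\E\oplus\A)$ of adjointable operators (essentially the linking algebra), check that $x\mapsto T_x$ is a linear isometry with $T_x^*T_y=\iota(\langle x,y\rangle)$, apply Corollary \ref{10} there, and pull the state back along the corner embedding $\iota$. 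This buys self-containedness --- no appeal to the external representation lemma --- at the cost of using the standard fact that adjointable operators on a Hilbert $C^*$-module form a $C^*$-algebra; and your unitality argument (comparing $\iota(\langle e,e\rangle)\le\|e\|^2\iota(1_{\A})$ and using that $\iota(1_{\A})$ is a projection dominated by $1_{\mathcal{L}(\E\oplus\A)}$) spells out a point the paper's phrase ``$\phi(e)=\tilde\phi(\tilde e)$ is the required state'' leaves implicit. Two small caveats: handle $e=0$ separately, as you note; and since the paper's definition of a Hilbert $C^*$-module only demands a semi-inner product and does not state completeness, you should either read the hypothesis ``Banach space $\E$'' as guaranteeing definiteness and completeness (as the paper evidently does) or first pass to the quotient and completion of $\E\oplus\A$ before forming the adjointable operators; also your argument, like the paper's module definition, is written for the complex case, whereas the scalar parts of the paper allow $\bF=\bR$ as well.
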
\begin{proof} We prove the theorem for the special case $\E =  \mathscr B(\hc, \hk)$ . The general  case follows by Lemma 4.3 of \cite{2013}. 
The reverse direction is easy. Now let $e$ be orthogonal to $\B$. For any operator $t\in\mathscr B(\hc,\hk)$ we denote by $\tilde t$, the operator on $\hc\oplus \hk$ given by
 $ \tilde t
=\left[\begin{array}{clrr}
0 & 0 \\
t & 0
\end{array} \right].
$
We have $e$ is Birkhoff-James orthogonal to $\B$  if and only if $\tilde e$ is Birkhoff-James orthogonal to $\tilde\B = \{\tilde b : b\in\B\}$. Now using Corollary \ref{10}, we get that there exists $\tilde\phi\in\mathcal S_{\mathscr B(\hc\oplus \hk)}$ such that $\tilde\phi(\tilde e^*\tilde e) = \|\tilde e\|^2 \text{ and }\tilde\phi(\tilde e^*\tilde b) =0 \text{ for all }\tilde b\in\tilde\B.$ Now $\phi$ defined as $\phi(e) = \tilde\phi(\tilde e)$ is the required state.
\end{proof}

Another approach to prove the above theorem has been briefly discussed after Theorem 3.7 in \cite{GroverSingla}.
We also remark that some related results with restricted hypotheses for $\mathscr B(\hc)$ and $\mathscr B(\hc, \hk)$ have appeared recently in \cite{arxiv}. The results in this article are stronger in these spaces.

\section{Remarks}\label{remarks}

\begin{remark} For a complex  $C^*$-algebra $\A$ and a $C^*$-subalgebra $\B$ of $\A$ such that $1_{\A}\in\B$, a conditional expectation from $\A$ to $\B$  is a positive linear map $E$ of norm $1$ such that $E(1_{\A}) = 1_{\A}$ and  $E(b_1 a b_2) = b_1E(a)b_2$ for all $b_1, b_2\in\B$ and $a\in\A$. For any given conditional expectation $E$ from $\A$ to $\B$, we can define a $\B$-valued inner product on $\A$ given by $\bra a_1|a_2\ket_{E} = E(a_1^*a_2)$ (see \cite{rieffel}). So 
\begin{eqnarray*} \bra a-E(a)|a-E(a)\ket_{E} &=& E((a-E(a))^*(a-E(a)))\\
&=& E(a^*a) - E(E(a)^*a) - E(a^*E(a))+E(E(a)^*E(a))\\
&=& E(a^*a) - E(a)^*E(a) - E(a)^*E(a) + E(a)^*E(a)E(1_{\A})\\
&=& E(a^*a) - E(a)^*E(a).\end{eqnarray*} 
For $\phi\in\mathcal S_{\A}$, we  have 
\begin{equation}
\phi(\bra a-E(a)|a-E(a)\ket_{E}) = \phi(E(a^*a)) - \phi(E(a)^*E(a)).\label{cc}
\end{equation}


Since $a^*a \leq \|a\|^21_{\A}$ and $E(1_{\A}) = 1_{\A}$, we get $ \phi(E(a^*a)) \leq \|a\|^2$ . 
So \begin{equation}\label{56} \phi(E(a^*a)) - \phi(E(a)^*E(a)) \leq \|a\|^2. \end{equation} By \eqref{cc} and \eqref{56}, we obtain \begin{equation}\label{54} \phi(\bra a-E(a)|a-E(a)\ket_{E}) \leq \|a\|^2.\end{equation}
 
Now for $b\in \B$, \begin{equation}\label{53} \bra a-E(a)|a-E(a)\ket_{E} = \bra a-b-E(a-b)|a-b-E(a-b)\ket_{E}.\end{equation}
By \eqref{54} and \eqref{53}, we obtain $\phi(\bra a-E(a)|a-E(a)\ket_{E}) \leq \|a-b\|^2$ for all $b\in \B$, and so
$\phi\left(\bra a-E(a)|a-E(a)\ket_{E}\right) \leq \dist(a,\B)^2 $. Thus we obtain a lower bound for $\dist(a, \B)$ as follows:
\begin{equation}\label{98}\dist(a, \B)^2\geq  \sup\{\phi(E(a^*a)-E(a)^*E(a)): \phi\in\mathcal S_{\A}, E \text{ is a conditional expectation from } \A \text{ to } \B \},\end{equation} {(where $\sup(\emptyset) = -\infty$).}\end{remark}

\begin{remark} In the case $\B = \bC 1_{\A}$, equality holds in \eqref{98}. To see this, let $\langle a, \bC1_{\A}\rangle$ be the subspace generated by $a$ and $1_{\A}$.  Let $\lambda_0 1_{\A}$ be a best approximation to $a$ in $\bC1_{\A}$. We define $\tilde{E}: \langle a, \bC1_{\A}\rangle \rightarrow \bC1_{\A}$ as $\tilde{E}(a+\lambda 1_{\A}) =(\lambda_0+\lambda)1_{\A}$. For any $c\in \A$, the norm of the best approximation of $c$ to $\bC 1_{\A}$ is less than or equal to $\|c\|$. Since $(\lambda_0+\lambda)1_{\A}$ is the best approximation to $a+\lambda 1_{\A}$, we get that $\|\tilde{E}\|=1$. By Hahn-Banach theorem, there exists an extension $E$ of $\tilde{E}$ which is of  norm $1$. By Corollary II.6.10.3 of \cite{Blackadar}, $E$ is a conditional expectation. By Theorem \ref{58}, there exists $\phi\in\mathcal S_{\A}$ such that $\dist(a, \B)^2 = \phi(a^*a) - |\lambda_0|^2$ and $\phi(a) =\lambda_0 = \phi(E(a))$. Since  $\phi\circ E = \phi$, we get the required state for which equality in \eqref{98} holds.  \end{remark}
\begin{remark} It would be very interesting to find a counterexample to equality in \eqref{98} when $\B \neq \bC 1_{\A}$. \end{remark}

\textbf{Acknowledgments}

We would like to thank  Sneh Lata and Ved Prakash Gupta for many useful discussions. We would also like to acknowledge several discussions with  Amber Habib, which helped us to understand the geometric ideas behind the theorems.
The research of the first-named author is supported by INSPIRE Faculty Award IFA14-MA-52 of DST, India, and by Early Career Research Award ECR/2018/001784 of SERB, India.

\end{document}